\newtheorem{theorem}{Theorem}[section]
\newtheorem{lemma}[theorem]{Lemma}
\theoremstyle{definition}
\theoremstyle{remark}
\newtheorem{remark}[theorem]{Remark}
\numberwithin{equation}{section}
\renewcommand\subsection{\@startsection{subsection}{2}%
	\z@{-.5\linespacing\@plus-.7\linespacing}{.5\linespacing}%
	{\normalfont\scshape}}
\renewcommand\subsubsection{\@startsection{subsubsection}{3}%
	\z@{.5\linespacing\@plus.7\linespacing}{-.5em}%
	{\normalfont\scshape}}
\begin{document}

\title[High-order Compact FDM for Acoustic Wave Equations]{An Efficient and  high accuracy  finite-difference scheme for the acoustic wave equation in 3D heterogeneous media}

\author{Keran Li, Wenyuan Liao}
\address{Department of Mathematics and Statistics, University of Calgary, AB, T2N 1N4, Canada}
\email{keran.li1@ucalgary.ca}

\email{wliao@ucalgary.ca}





\begin{abstract}
Efficient and accurate numerical simulation of  3D acoustic wave propagation in heterogeneous media plays an important role in 
the success of seismic full waveform  inversion (FWI) problem. 
In this work we employed the combined scheme  and developed a new explicit compact high-order finite difference scheme to solve the 3D  acoustic wave equation with spatially variable acoustic velocity. The boundary conditions  for the second derivatives of spatial variables have been derived by using
the equation itself and the boundary  condition for $u$. Theoretical analysis shows that the new scheme has an accuracy order of $O(\tau^2) + O(h^4)$, where $\tau$ is the time step and $h$ is the grid size. Combined with Richardson extrapolation or Runge-Kutta method, the new method can be improved to 4th-order in time. Three numerical experiments are  conducted to validate the efficiency and accuracy of the new scheme. The stability of the new scheme has been proved by an energy method, which shows that the new scheme is conditionally stable with a Courant - Friedrichs - Lewy (CFL) number which is slightly lower than that of the Pad\'{e} approximation based method.

\smallskip
\noindent \textbf{Keywords } Acoustic Wave Equation, Compact Finite Difference, High-order Scheme, Heterogenous Media.
\end{abstract}

\maketitle

\section{Introduction}
Finite Difference Methods (FDM) have been extensively applied in both science and engineering models, especially in solving partial differential equations numerically, when the analytical solution is not available. For example,  the acoustic wave equation with a non-zero point source function 
has been widely used to model the wave propagation in Geophysics. These acoustic wave equations widely arise in various applications including underground imaging, seabed exploration, etc.  The finite difference method is one of the popular choices for solving  these models, and the efficiency and accuracy of the finite difference method are critical, especially when the problem is in large size.

During the past several decades,  many FD methods have been developed to solve acoustic wave equations\cite{alford1974accuracy,tam1993dispersion,yang2012central,liu2009implicit}.
Due to their high accuracy and simple implementation , the high-order FD methods have  attracted great  interests of many Applied Mathematicians and Geophysicists, 
who put a great deal of efforts in the analysis and development of  high-order FD methods for acoustic wave equations. Many  efficient high-order methods have 
been developed and implemented with great success\cite{liu2009new, dablain1986application,zingg1996high, zhang2017new,cohen1996construction,takeuchi2000optimally,oliveira2003fourth}, to name a few.

It is worth to notice that high-order methods  require less grid points \cite{etgen2007computational} and is effective in  minimizing dispersion error \cite{finkelstein2007finite}. It was also reported that high-order scheme allow a more coarse spatial sampling rate \cite{levander1988fourth}. 

However, many high-order FDM are not compact, which leads to difficulty in dealing with boundary conditions. For example, a typical 4th-order FDM requires a 5-point stencil to approximate $\Delta u$ in 1D cases, a 9-point stencil for 2D cases and 13-point stencil for 3D cases, respectively. Therefore, it is hard to implement if there is only one layer of boundary conditions. 

To overcome this issue, Compact FD method has been developed  to reduce the points needed to approximate derivatives but keep the high-order accuracy of FDM \cite{shukla2005derivation}. In \cite{lele1992compact} the author introduced a family of FDM to approximate the first and second derivatives, and those schemes can be compact with suitable choices of parameters. Some more information about recent compact FDM can be found in \cite{shukla2005derivation}.  More other reported  works
on compact high-order FD methods for solving acoustic wave equation can be found in \cite{kim1996optimized,liao2014dispersion, britt2018high} and references therein.

 Furthermore, many existing compact high-order FD methods were developed based on constant velocity model, thus,  fail in variable sound speed cases. Some schemes attempt to address this drawback.  For example, using a novel algebraic manipulation of finite difference operators, a high-order compact FDM was proposed in \cite{liao2018efficient} for 2D acoustic wave equations with variable velocity. Then a similar compact scheme based on Pad\'{e} approximation was developed in \cite{li2018compacthighorder} to solve 3D problem. 
Recently, in \cite{britt2018high} a new compact high-order method based on the efficient numerical solution of Helmholtz equation has been proposed to solve acoustic wave equation with variable  velocity. Another problem is that the stability analysis of the compact high-order FD method is difficult  using the standard Von Neumann stability analysis, which works only for problems with constant velocity. Very recently, an energy method was introduced to derive the stability condition for variable coefficients case \cite{britt2018high,li2018compacthighorder}.

In this paper, a new explicit compact scheme with 2nd-order temporal and 4th-order spatial accuracy has been proposed and fully studied. This new method overcomes the variable acoustic velocity problem and is very efficient and easy to implement even when the acoustic velocity is discontinuous. The boundary conditions needed in the new scheme are obtained from the equation itself exactly, while the initial condition at ghost time level for the solving process can be approximated by the equation with high accuracy. The stability condition is obtained by an energy method. The rest of the paper is organized as the follows. In Section 2 the new compact high order numerical scheme is derived, which is followed by a rigorous proof of the stability in Section 3. In Section 4 several methods are reviewed to improve the temporal accuracy to 4th-order. Three numerical examples are solved by the new method in Section 5, which is followed by a conclusion in Section 6.


\section{Derivation of the Compact High-order Scheme}\label{4th-order in Space}
In this paper the 3D acoustic wave equation is considered
\begin{equation}\label{3dacousticwave}
u_{tt} = \nu^2(x,y,z)\Delta u + s(t,x,y,z),\ (t,x,y,z)\in [0,T]\times \Omega
\end{equation}
with initial conditions and Dirichlet boundary conditions. Here $\nu (x,y,z)$ is the acoustic velocity, $s(t,x,y,z)$ the source function and $\Omega$ the computational domain.

The key to obtain high-order compact FD scheme for solving equation (\ref{3dacousticwave}) is to approximation $\Delta u$ with high order compact finite difference approximation.
In \cite{lele1992compact, chu1998three} a general scheme of high-order approximation of second derivative was proposed, which leads to the following so-called
combined compact 4th-order difference approximation of the second derivative
\begin{equation}\label{lele}
	a_1 v''_{i-1} + a_0 v''_i + a_1 v''_{i+1} = \frac{1}{h_x^2}
	\big(b_1 v_{i-1}+b_0 v_i + b_1 v_{i+1}\big)
\end{equation}
where $a_0 = 1$, $a_1 = \frac{1}{10}$, $b_0 = -\frac{12}{5}$, $b_1 = \frac{6}{5}$, and $h_x$ is the grid size in $x$.  In this paper the idea of combined compact finite difference scheme will be extended to
acoustic wave equation with variable velocity.

For the sake of simplicity, assume that $\Omega$ is a 3D rectangular box defined as
\begin{equation*}
	\Omega = [x_{min},x_{max}]\times[y_{min},y_{max}]\times[z_{min},z_{max}],
\end{equation*}
which is discretized into an $(N_x+2) \times (N_y+2) \times (N_z+2)$ grid with spatial grid sizes $h_x = \dfrac{x_{max} - x_{min}}{N_x + 1}$, $h_y = \dfrac{y_{max} - y_{min}}{N_y +1}$ and $h_z = \dfrac{z_{max} - z_{min}}{N_z + 1}$. Then the initial-boundary value problem of the 3D acoustic wave equation can be rewritten in this form
\begin{equation}\label{acoustic eq}
	\begin{cases}
	u_{tt} = \nu^2(x,y,z)\Delta u + s(t,x,y,z),\\
	u|_{t=0} = \alpha(x,y,z),\ u_t|_{t=0} = \beta(x,y,z),\\
	u|_{x=x_{min}} = f_0(t,y,z),\ u|_{x=x_{max}} = f_1(t,y,z), \\
	u|_{y=y_{min}} = g_0(t,x,z),\ u|_{y=y_{max}} = g_1(t,x,z), \\
	u|_{z=z_{min}} = h_0(t,x,y),\ u|_{z=z_{max}} = h_1(t,x,y). \\
	\end{cases}
\end{equation}

Denoted by $\tau$ the time step, $u_{i,j,k}^n$ the numerical solution at grid point $(x_i,y_j,z_k)=(x_{min} +i h_x,y_{min}+j h_y,z_{min}+ k h_z)$ and time level $t_n = n \tau$. The temporal derivatives can be approximated by the standard 2nd-order centre difference scheme. For the spatial derivatives one has the following 4th-order approximation
\begin{equation}
	\begin{split}
	& a_1 (u_{xx})_{i-1,j,k}^n + a_0 (u_{xx})_{i,j,k}^n + a_1 (u_{xx})_{,i+1,j,k}^n \\
	=& \frac{1}{h_x^2}
	\big(b_1 u_{i-1,j,k}^n+b_0 u_{i,j,k}^n + b_1 u_{i+1,j,k}^n\big),
	\end{split}
\end{equation}
\begin{equation}
	\begin{split}
		& a_1 (u_{yy})_{i,j-1,k}^n + a_0 (u_{yy})_{i,j,k}^n + a_1 (u_{yy})_{i,j+1,k}^n \\
		=& \frac{1}{h_y^2}
		\big(b_1 u_{i,j-1,k}^n+b_0 u_{i,j,k}^n + b_1 u_{i,j+1,k}^n\big),
	\end{split}
\end{equation}
\begin{equation}
\begin{split}
& a_1 (u_{zz})_{i,j,k-1}^n + a_0 (u_{zz})_{i,j,k}^n + a_1 (u_{zz})_{i,j,k+1}^n \\
=& \frac{1}{h_z^2}
\big(b_1 u_{i,j,k-1}^n+b_0 u_{i,j,k}^n + b_1 u_{i,j,k+1}^n\big),
\end{split}
\end{equation}
for $1\leqslant i\leqslant N_x$, $1\leqslant j\leqslant N_y$, $1\leqslant k\leqslant N_z$ where $(u_{xx})_{i,j,k}^n$, $(u_{yy})_{i,j,k}^n$, $(u_{zz})_{i,j,k}^n$ are the approximation of the second derivatives of $u$. Define the following vectors
\begin{equation}
	u_{*,j,k}^n =
	\begin{pmatrix}
	u_{1,j,k}^n\\
	u_{2,j,k}^n\\
	\vdots\\
	u_{N_x,j,k}^n
	\end{pmatrix}_{N_x \times 1},\ 
	(u_{xx})_{*,j,k}^n =
	\begin{pmatrix}
	(u_{xx})_{1,j,k}^n\\
	(u_{xx})_{2,j,k}^n\\
	\vdots\\
	(u_{xx})_{N_x,j,k}^n
	\end{pmatrix}_{N_x \times 1},
\end{equation}
also define $u_{i,*,k}$, $(u_{yy})_{i,*,k}$, $u_{i,j,*}$, $(u_{zz})_{i,j,*}$ in similar way. Then the approximation can be written in vector form
\begin{equation}\label{u_xx}
	A_x (u_{xx})_{*,j,k}^n+a_1q_x^{a,n} = \frac{1}{h_x^2} \left(B_x u_{*,j,k}^n+b_1q_x^{b,n} \right)
\end{equation}
where
\begin{equation}
	A_x = 
	\begin{pmatrix}
	a_0 & a_1 \\
	a_1 & a_0 & a_1\\
		& \dots &\dots\\
		& a_1 & a_0 & a_1\\
		&	  & a_1 & a_0
	\end{pmatrix}_{N_x \times N_x},\ 
	B_x = 
	\begin{pmatrix}
	b_0 & b_1 \\
	b_1 & b_0 & b_1\\
	& \dots &\dots\\
	& b_1 & b_0 & b_1\\
	&	  & b_1 & b_0
	\end{pmatrix}_{N_x \times N_x}
\end{equation}
are tridiagonal matrices, and
\begin{equation}
	q_x^{a,n} =
	\begin{pmatrix}
	(u_{xx})_{0,j,k}^n \\
	0\\
	\vdots\\
	0\\
	(u_{xx})_{N_x+1,j,k}^n
	\end{pmatrix}_{N_x \times 1},\ 
	q_x^{b,n} =
	\begin{pmatrix}
	u_{0,j,k}^n \\
	0\\
	\vdots\\
	0\\
	u_{N_x+1,j,k}^n
	\end{pmatrix}_{N_x \times 1}
\end{equation}
represent the boundary values. In a similar way one can rewrite the approximation of $u_{yy}$ and $u_{zz}$ in vector form
\begin{equation}\label{u_yy}
A_y (u_{yy})_{i,*,k}^n+a_1q_y^{a,n} = \frac{1}{h_y^2} \left ( B_y u_{i,*,k}^n+b_1q_y^{b,n}\right),
\end{equation}
\begin{equation}\label{u_zz}
A_z (u_{zz})_{i,j,*}^n+a_1q_z^{a,n} =\frac{1}{h_z^2} \left(B_z u_{i,j,*}^n+b_1q_z^{b,n}\right).
\end{equation}
Now the boundary values of $u_{i,j,k}^n$ are already known. For the boundary values of $u_{xx}$, $u_{yy}$ and $u_{zz}$, one can obtain them from the equation (\ref{acoustic eq}). For example, for $u_{xx}$
\begin{equation}\label{u_xx 0}
	(u_{xx})_{0,j,k}^n =
	\left(\frac{u_{tt}}{\nu^2}-\frac{s}{\nu^2}-u_{yy}-u_{zz}
	\right)|_{0,j,k}^n
\end{equation}
\begin{equation}\label{u_xx 1}
	(u_{xx})_{N_x+1,j,k}^n =
	\left(\frac{u_{tt}}{\nu^2}-\frac{s}{\nu^2}-u_{yy}-u_{zz}
	\right)|_{N_x+1,j,k}^n.
\end{equation}
Note that $u|_{x=x_{min}} = f_0(t,y,z)$ and $u|_{x=x_{max}} = f_1(t,y,z)$, then one has
\begin{equation}
(u_{tt})_{0,j,k}^n = (\partial_t^2 u)|_{x=x_{min}} = \partial_t^2 (u|_{x=x_{min}}) = (\partial_t^2 f_0)(t_n,y_j,z_k)
\end{equation}
\begin{equation}
(u_{tt})_{N_x+1,j,k}^n = (\partial_t^2 u)|_{x=x_{max}} = \partial_t^2 (u|_{x=x_{max}}) = (\partial_t^2 f_1)(t_n,y_j,z_k)
\end{equation}
\begin{equation}
	(u_{yy})_{0,j,k}^n = (\partial_y^2 u)|_{x=x_{min}} = \partial_y^2 (u|_{x=x_{min}}) = (\partial_y^2 f_0)(t_n,y_j,z_k)
\end{equation}
\begin{equation}
	(u_{yy})_{N_x+1,j,k}^n = (\partial_y^2 u)|_{x=x_{max}} = \partial_y^2 (u|_{x=x_{max}}) = (\partial_y^2 f_1)(t_n,y_j,z_k)
\end{equation}
\begin{equation}
(u_{zz})_{0,j,k}^n = (\partial_z^2 u)|_{x=x_{min}} = \partial_z^2 (u|_{x=x_{min}}) = (\partial_z^2 f_0)(t_n,y_j,z_k)
\end{equation}
\begin{equation}
(u_{zz})_{N_x+1,j,k}^n = (\partial_z^2 u)|_{x=x_{max}} = \partial_z^2 (u|_{x=x_{max}}) = (\partial_z^2 f_1)(t_n,y_j,z_k).
\end{equation}
Substitute the above equations into (\ref{u_xx 0})(\ref{u_xx 1}), one obtains
\begin{equation}
	(u_{xx})_{0,j,k}^n = \frac{1}{\nu^2_{0,j,k}}\big[(\partial_t^2 f_0)_{j,k}^n - s_{0,j,k}^n\big] - (\partial_y^2 f_0)_{j,k}^n - (\partial_z^2 f_0)_{j,k}^n
\end{equation}
\begin{equation}
(u_{xx})_{N_x+1,j,k}^n = \frac{1}{\nu^2_{N_x+1,j,k}}\big[(\partial_t^2 f_1)_{j,k}^n - s_{N_x+1,j,k}^n\big] - (\partial_y^2 f_1)_{j,k}^n - (\partial_z^2 f_1)_{j,k}^n.
\end{equation}
The boundary values of $u_{yy}$ and $u_{zz}$ can be obtained in a similar way. Now the linear systems (\ref{u_xx})(\ref{u_yy})(\ref{u_zz}) can be solved. Note that the matrices $A_x$, $A_y$ and $A_z$ are tridiagonal matrices, thus the linear systems can be solved by Thomas Algorithm in $O(N_x)$, $O(N_y)$ and $O(N_z)$ complexity, respectively. But one has to solve $N_y\times N_z$ linear systems for $u_{xx}$, $N_x \times N_z$ linear systems for $u_{yy}$ and $N_x \times N_y$ linear systems for $u_{zz}$, thus the overall complexity is $O(N_x N_y N_z)$ for each time step. In summary one has
\begin{equation}\label{u_xx eq}
	(u_{xx})_{*,j,k}^n =A_x^{-1} \left[\frac{1}{h_x^2}\left(B_x u_{*,j,k}^n+b_1q_x^{b,n} \right)- a_1q_x^{a,n}\right]
\end{equation}
\begin{equation}\label{u_yy eq}
(u_{yy})_{i,*,k}^n =A_y^{-1} \left[\frac{1}{h_y^2} \left(B_y u_{i,*,k}^n+b_1q_y^{b,n} \right)- a_1q_y^{a,n}\right]
\end{equation}
\begin{equation}\label{u_zz eq}
(u_{zz})_{i,j,*}^n =A_z^{-1} \left[\frac{1}{h_z^2} \left(B_z u_{i,j,*}^n+b_1q_z^{b,n} \right)- a_1q_z^{a,n}\right]
\end{equation}
i.e. the values of $(u_{xx})_{i,j,k}^n$, $(u_{yy})_{i,j,k}^n$ and $(u_{zz})_{i,j,k}^n$ for $1\leqslant i \leqslant N_x$, $1\leqslant j \leqslant N_y$ and $1\leqslant k \leqslant N_z$.

The equation (\ref{acoustic eq}) also yields 
\begin{equation}
(\partial_t^2 u)|_{t=0} = \nu^2 \Delta u|_{t=0} + s|_{t=0} = \nu^2 \Delta \alpha + s|_{t=0},
\end{equation}
\begin{equation}
(\partial_t^3 u)|_{t=0} = \nu^2 \Delta (\partial_t u)|_{t=0} + (\partial_t s)|_{t=0} = \nu^2 \Delta \beta + (\partial_t s)|_{t=0},
\end{equation}
where $\alpha = u|_{t=0}$ and $\beta = u_t|_{t=0}$, then one has
\begin{equation}
(u_{tt})^0_{i,j,k} = 
\nu^2_{i,j,k}(\Delta \alpha)_{i,j,k}+s^n_{i,j,k},
\end{equation}
\begin{equation}
	(u_{ttt})^0_{i,j,k} = 
	\nu^2_{i,j,k}(\Delta \beta)_{i,j,k}+(\partial_t s)^0_{i,j,k}.
\end{equation}
Thus $u^{-1}_{i,j,k}$, which denotes the numerical approximation at the ghost time level $t = -\tau$, can be approximated by
\begin{equation}\label{u at -tau}
	\begin{split}
	u^{-1}_{i,j,k} =& u^0_{i,j,k} - \tau (u_t)^0_{i,j,k} + \frac{1}{2}{\tau}^2 (u_{tt})^0_{i,j,k} - \frac{1}{6}{\tau}^3 (u_{ttt})^0_{i,j,k} + O(\tau^4)\\
	=& \alpha_{i,j,k} - \tau \beta_{i,j,k} + \frac{1}{2}{\tau}^2[\nu^2_{i,j,k}(\Delta \alpha)_{i,j,k}+s^0_{i,j,k}] \\
	-&\frac{1}{6}{\tau}^3[\nu^2_{i,j,k}(\Delta \beta)_{i,j,k}+(\partial_t s)^0_{i,j,k}]+O(\tau^4).
	\end{split}
\end{equation}

Finally, the following schemes with error $O(\tau^2)+O(h_x^4)+O(h_y^4)+O(h_z^4)$ is obtained
\begin{equation}
	u^{n+1}_{i,j,k} = \tau^2 [\nu^2_{i,j,k}(\Delta u)^{n}_{i,j,k}+s^n_{i,j,k} ]+2 u^{n}_{i,j,k} - u^{n-1}_{i,j,k}, \ n = 0, 1,2,\cdots
\end{equation}
with $u^{-1}_{i,j,k}$ from (\ref{u at -tau}) and $(\Delta u)^{n}_{i,j,k}$ from (\ref{u_xx eq})(\ref{u_yy eq})(\ref{u_zz eq}). A complete scheme with details is included in Appendix \ref{The Complete Scheme}.

\begin{remark}
	Note that if there exists two layers of boundary conditions, then an analogy of (\ref{lele}) can lead to 8th-order spatial accuracy.
\end{remark}

\section{Stability Analysis}
The stability analysis employs an energy method which is valid in both constant and variable coefficients cases. Consider the acoustic equation with zero boundary conditions and zero source term
\begin{equation}
	u_{tt} = \nu^2(x,y,z) \Delta u.
\end{equation}
For simplicity assume $h_x = h_y = h_z=h$ and $N_x = N_y = N_z=N$. Recall the 4th-order spatial approximation
\begin{equation}\label{u_xx stability}
\begin{split}
& a_1 (u_{xx})_{i-1,j,k}^n + a_0 (u_{xx})_{i,j,k}^n + a_1 (u_{xx})_{i+1,j,k}^n \\
=& \frac{1}{h^2}
\big(b_1 u_{i-1,j,k}^n+b_0 u_{i,j,k}^n + b_1 u_{i+1,j,k}^n\big)
\end{split}
\end{equation}
\begin{equation}\label{u_yy stability}
\begin{split}
& a_1 (u_{yy})_{i,j-1,k}^n + a_0 (u_{yy})_{i,j,k}^n + a_1 (u_{yy})_{i,j+1,k}^n \\
=& \frac{1}{h^2}
\big(b_1 u_{i,j-1,k}^n+b_0 u_{i,j,k}^n + b_1 u_{i,j+1,k}^n\big)
\end{split}
\end{equation}
\begin{equation}\label{u_zz stability}
\begin{split}
& a_1 (u_{zz})_{i,j,k-1}^n + a_0 (u_{zz})_{i,j,k}^n + a_1 (u_{zz})_{i,j,k+1}^n \\
=& \frac{1}{h^2}
\big(b_1 u_{i,j,k-1}^n+b_0 u_{i,j,k}^n + b_1 u_{i,j,k+1}^n\big).
\end{split}
\end{equation}
If one let
\begin{equation}\label{matrix A and B}
A = 
\begin{pmatrix}
a_0 & a_1 \\
a_1 & a_0 & a_1\\
& \dots &\dots\\
& a_1 & a_0 & a_1\\
&	  & a_1 & a_0
\end{pmatrix}_{N \times N},\ 
B = 
\begin{pmatrix}
b_0 & b_1 \\
b_1 & b_0 & b_1\\
& \dots &\dots\\
& b_1 & b_0 & b_1\\
&	  & b_1 & b_0
\end{pmatrix}_{N \times N}
\end{equation}
and $U^n$ be the vector form of the numerical solution $u^n_{i,j,k}$
\begin{equation}
	U^n =
	\begin{pmatrix}
	u^n_{1,1,1} & \dots & u^n_{N,1,1} &u^n_{1,2,1} & \dots & u^n_{N,2,1} & u^n_{1,3,1} & \dots & u^n_{N,3,1} & \dots
	\end{pmatrix}^T
\end{equation}
i.e. $U^n$ is an $N^3\times 1$ vector in which $u^n_{i,j,k}$ is located at the $(k N^2 + j N + i)$-th row. Also define $U_{xx}^n$, $U_{yy}^n$ and $U_{zz}^n$ as the vector forms of the second derivatives in a similar way. Then it is straightforward to see that the equations (\ref{u_xx stability})(\ref{u_yy stability})(\ref{u_zz stability}) can be written in the following form
\begin{equation}
	(A\otimes I_N \otimes I_N) U_{xx}^n = (B\otimes I_N \otimes I_N)U^n
\end{equation}
\begin{equation}
(I_N\otimes A \otimes I_N) U_{yy}^n = (I_N\otimes B \otimes I_N)U^n
\end{equation}
\begin{equation}
(I_N\otimes I_N \otimes A) U_{zz}^n = (I_N\otimes I_N \otimes B)U^n
\end{equation}
where $\otimes$ indicates Kronecker product and $I_N$ is the $N\times N$ identity matrix. Then one has
\begin{equation}
 U_{xx}^n =(A\otimes I_N \otimes I_N)^{-1} (B\otimes I_N \otimes I_N)U^n,
\end{equation}
\begin{equation}
 U_{yy}^n = (I_N\otimes A \otimes I_N)^{-1}(I_N\otimes B \otimes I_N)U^n,
\end{equation}
\begin{equation}
 U_{zz}^n = (I_N\otimes I_N \otimes A)^{-1}(I_N\otimes I_N \otimes B)U^n.
\end{equation}
\begin{lemma}\label{Kronecker property}
	Kronecker product is associative. The followings hold
	\begin{equation}
		\begin{split}
		I_m \otimes I_n &= I_{mn}\\
		(A\otimes B)(C\otimes D) &= (AC)\otimes (BD)\\
		(A\otimes B)^{-1}&= (A^{-1}\otimes B^{-1})\\
		(A\otimes B)^{T}&= (A^{T}\otimes B^{T})
		\end{split}
	\end{equation}
\end{lemma}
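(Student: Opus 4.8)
\textbf{Proof proposal for Lemma \ref{Kronecker property}.}
The plan is to reduce everything to the entrywise definition of the Kronecker product and then verify each identity by a short index computation. Recall that for $A = (a_{ij})$ of size $m_1 \times n_1$ and $B = (b_{kl})$ of size $m_2 \times n_2$, the product $A \otimes B$ is the $m_1 m_2 \times n_1 n_2$ block matrix whose entry in block position $(i,j)$ and intra-block position $(k,l)$ equals $a_{ij} b_{kl}$. I would adopt the multi-index notation $(A \otimes B)_{(i,k),(j,l)} = a_{ij} b_{kl}$, where the row index runs over pairs $(i,k)$ in lexicographic order and similarly for columns, matching exactly the ordering used to assemble $U^n$ and $U_{xx}^n, U_{yy}^n, U_{zz}^n$ from the scalar quantities. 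With this convention fixed, the identity $I_m \otimes I_n = I_{mn}$ is immediate: its $(i,k),(j,l)$ entry is $\delta_{ij}\delta_{kl}$, which is $1$ precisely when $(i,k) = (j,l)$. Associativity, $(A \otimes B) \otimes C = A \otimes (B \otimes C)$, follows by comparing the entry $a_{ij} b_{kl} c_{pq}$ obtained from either bracketing against a triple multi-index $((i,k,p),(j,l,q))$.

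The central step is the mixed-product rule $(A \otimes B)(C \otimes D) = (AC) \otimes (BD)$, which I would prove by expanding the matrix product in the multi-index notation:
\begin{equation*}
\big((A \otimes B)(C \otimes D)\big)_{(i,k),(j,l)} = \sum_{p,q} a_{ip} b_{kq} c_{pj} d_{ql} = \Big(\sum_{p} a_{ip} c_{pj}\Big)\Big(\sum_{q} b_{kq} d_{ql}\Big) = (AC)_{ij}\,(BD)_{kl},
\end{equation*}
valid whenever the sizes are compatible, i.e. $A,C$ and $B,D$ are conformable. The key manipulation is simply that the double sum over $(p,q)$ factors, since the summand is a product of a term depending only on $p$ and a term depending only on $q$.

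The remaining two identities are then corollaries and require essentially no new work. For the inverse, assuming $A$ and $B$ are square and invertible, the mixed-product rule gives
\begin{equation*}
(A \otimes B)(A^{-1} \otimes B^{-1}) = (A A^{-1}) \otimes (B B^{-1}) = I \otimes I = I,
\end{equation*}
and likewise for the product in the other order, so $(A \otimes B)^{-1} = A^{-1} \otimes B^{-1}$. For the transpose, a direct index check suffices: $\big((A \otimes B)^T\big)_{(i,k),(j,l)} = (A \otimes B)_{(j,l),(i,k)} = a_{ji} b_{lk} = (A^T)_{ij}(B^T)_{kl} = (A^T \otimes B^T)_{(i,k),(j,l)}$. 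I do not anticipate any genuine obstacle here, since all four statements are standard; the only point requiring care is keeping the row/column index ordering consistent throughout, so that the lexicographic pairing used in the lemma matches the block structure used later when the Kronecker factorizations of $A_x \otimes I_N \otimes I_N$ and its analogues are exploited in the stability argument.
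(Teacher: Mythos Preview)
Your proof is correct. The paper itself does not prove this lemma at all; it simply records the four identities and cites \cite{horn1991topics} (``The above two lemmas can be found in \cite{horn1991topics}''). You instead supply a self-contained argument from the entrywise definition, deriving the mixed-product rule by factoring the double sum and then obtaining the inverse and transpose identities as immediate corollaries. This is the standard textbook route and is entirely adequate; the only thing it buys over the paper's approach is that the reader need not consult an external reference, at the cost of a few lines of index bookkeeping.
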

\begin{lemma}\label{Kronecker eig}
	Any eigenvalue of $A \otimes B$ arises as a product of eigenvalues of $A$ and $B$.
\end{lemma}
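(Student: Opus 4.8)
The plan is to reduce to upper-triangular form by a Schur decomposition and then read off the spectrum using the algebraic identities of Lemma~\ref{Kronecker property}. First I would dispose of the easy direction, which is really the reverse of what is claimed: if $Av=\lambda v$ and $Bw=\mu w$ with $v,w\neq 0$, then the mixed-product rule gives $(A\otimes B)(v\otimes w)=(Av)\otimes(Bw)=\lambda\mu\,(v\otimes w)$ with $v\otimes w\neq 0$, so every product $\lambda\mu$ of an eigenvalue of $A$ and an eigenvalue of $B$ is an eigenvalue of $A\otimes B$. The substance of the lemma is that $A\otimes B$ has \emph{no other} eigenvalues, i.e.\ each of its eigenvalues \emph{arises} as such a product.

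For that, write Schur decompositions $A=U_A T_A U_A^{*}$ and $B=U_B T_B U_B^{*}$, where $U_A,U_B$ are unitary and $T_A,T_B$ are upper triangular whose diagonal entries are, with multiplicity, the eigenvalues $\lambda_1,\dots,\lambda_m$ of $A$ and $\mu_1,\dots,\mu_n$ of $B$. Using the identities in Lemma~\ref{Kronecker property},
\[
A\otimes B=(U_A T_A U_A^{*})\otimes(U_B T_B U_B^{*})=(U_A\otimes U_B)(T_A\otimes T_B)(U_A\otimes U_B)^{*},
\]
and $U_A\otimes U_B$ is unitary, so $A\otimes B$ is unitarily similar to $T_A\otimes T_B$. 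It therefore suffices to compute the spectrum of $T_A\otimes T_B$.

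The key step is the observation that a Kronecker product of two upper-triangular matrices is again upper triangular, with diagonal entries the pairwise products of the diagonals of the factors: writing $T_A\otimes T_B$ in block form with $(p,q)$ block equal to $(T_A)_{pq}T_B$, the sub-diagonal blocks ($p>q$) vanish since $(T_A)_{pq}=0$, while each diagonal block $(T_A)_{pp}T_B=\lambda_p T_B$ is upper triangular with diagonal $\lambda_p\mu_1,\dots,\lambda_p\mu_n$. Since a triangular matrix carries its full spectrum on its diagonal, the eigenvalues of $T_A\otimes T_B$ are exactly the $mn$ numbers $\lambda_p\mu_q$, and by the unitary similarity above the same is true of $A\otimes B$, proving the claim. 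I expect no genuine obstacle here; the only point requiring care is the block-triangular bookkeeping (the ordering of the two Kronecker factors and the tracking of multiplicities), everything else being a direct consequence of the identities already recorded in Lemma~\ref{Kronecker property} together with the existence of the Schur form.
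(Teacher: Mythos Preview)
Your argument is correct: the Schur decomposition reduces both factors to upper-triangular form, the mixed-product rule of Lemma~\ref{Kronecker property} then gives a unitary similarity $A\otimes B\sim T_A\otimes T_B$, and the block-triangular structure of $T_A\otimes T_B$ makes its diagonal---hence its spectrum---exactly the set $\{\lambda_p\mu_q\}$ with multiplicity. The bookkeeping you flag (block ordering, multiplicities) is handled by your description of the $(p,q)$ block as $(T_A)_{pq}T_B$.

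As for comparison with the paper: the paper does not prove this lemma at all. Both Lemma~\ref{Kronecker property} and Lemma~\ref{Kronecker eig} are simply stated and then deferred with the sentence ``The above two lemmas can be found in \cite{horn1991topics}.'' So you have supplied a self-contained proof where the paper only gives a citation; your Schur-form argument is in fact the standard one (and is essentially the proof given in Horn and Johnson), so there is no tension between the two.
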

The above two lemmas can be found in \cite{horn1991topics}. By those lemmas, one has
\begin{equation}
	\begin{split}
	A_1 :&=(A\otimes I_N \otimes I_N)^{-1} (B\otimes I_N \otimes I_N) = (A^{-1}B\otimes I_N \otimes I_N)\\
	A_2 :&=(I_N\otimes A \otimes I_N)^{-1} (B\otimes I_N \otimes I_N) = (I_N\otimes A^{-1}B \otimes I_N)\\
	A_3 :&=(I_N\otimes I_N \otimes A)^{-1} (B\otimes I_N \otimes I_N) = (I_N\otimes I_N \otimes A^{-1}B)\\
	\end{split}
\end{equation}
and 
\begin{equation*}
	\sigma(A_1)=\sigma(A_2)=\sigma(A_3)=\sigma(A^{-1}B)
\end{equation*}
where $\sigma$ denotes the spectrum.

Let $\circ$ denote the entrywise product of matrices, i.e. if $A=(a_{ml})_{q\times q}$ and $B=(b_{ml})_{q\times q}$ are two matrices then
\begin{equation}
	A\circ B = (a_{ml}b_{ml})_{q\times q}.
\end{equation}
Now one can write the numerical scheme in Section \ref{4th-order in Space} for acoustic equation with zero boundary condition and zero source term in the following form
\begin{equation}\label{U eq 0}
	\delta_t^2 U^n = \frac{\tau^2}{h^2}\cdot C \circ (A_1 U^n +A_2 U^n +A_3 U^n)
\end{equation}
where $\delta_t$ is the difference operator, $C$ the vector form of $\nu^2_{i,j,k}$. The scheme (\ref{U eq 0}) is equivalent to the following form
\begin{equation}\label{U eq}
	\Phi \circ \delta_t^2 U^n =  (A_1 U^n +A_2 U^n +A_3 U^n):= LU^n
\end{equation}
where $\Phi$ is the vector form of $\frac{h^2}{\tau^2} \frac{1}{\nu^2_{i,j,k}}$.

In order to prove the stability of the scheme (\ref{U eq}), it is necessary to estimate the spectrum of $L=A_1+A_2+A_3$.
\begin{lemma}
	$A_1$, $A_2$ and $A_3$ are self-adjoint matrices, and one has
	\begin{equation}
		\sigma(A_1)=\sigma(A_2)=\sigma(A_3)=\sigma(A^{-1}B) \subset (-6,-r(N)]
	\end{equation}
	and $L$ is self-adjoint with
	\begin{equation}
		\sigma (L) \subset (-18,-3r(N)].
	\end{equation}
	Here
	\begin{equation}
		r(N) = \left(1+\frac{1}{5}\cos \frac{\pi}{N+1}\right)^{-1}\cdot\left(\frac{12}{5} - \frac{12}{5}\cos \frac{\pi}{N+1}\right)> 0.
	\end{equation}
\end{lemma}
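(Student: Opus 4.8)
The plan is to diagonalise everything simultaneously. First I would observe that both $A$ and $B$ in (\ref{matrix A and B}) are polynomials in the single $N\times N$ symmetric tridiagonal matrix $T$ having zero diagonal and unit sub/super-diagonals, namely $A = I_N + \tfrac1{10}T$ and $B = -\tfrac{12}{5}I_N + \tfrac{6}{5}T$. The matrix $T$ has the well-known orthonormal eigenvectors $w^{(m)}$ with eigenvalues $\mu_m = 2\cos\frac{m\pi}{N+1}$, $1\le m\le N$. Hence $A$ and $B$ commute and are orthogonally diagonalised by the same matrix; $A$ is symmetric positive definite since its eigenvalues $1+\tfrac15\cos\frac{m\pi}{N+1}$ all exceed $\tfrac45$, so $A^{-1}$ exists; and $A^{-1}B$ is symmetric because $(A^{-1}B)^T = B A^{-1} = A^{-1}B$ (the last equality being $AB=BA$). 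Its eigenvalues are
\[
\lambda_m \;=\; \frac{-\tfrac{12}{5}+\tfrac{12}{5}\cos\frac{m\pi}{N+1}}{1+\tfrac15\cos\frac{m\pi}{N+1}} \;=\; f\!\Big(\cos\tfrac{m\pi}{N+1}\Big),\qquad f(c):=\frac{12(c-1)}{5+c}.
\]
Since each $A_i$ is $A^{-1}B$ tensored with identity factors, it is symmetric (Kronecker products of symmetric matrices are symmetric by Lemma \ref{Kronecker property}), which gives the self-adjointness claim; the identity $\sigma(A_i)=\sigma(A^{-1}B)$ was already established above.

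Next I would study $f$. A direct computation gives $f'(c) = 72/(5+c)^2 > 0$, so $f$ is strictly increasing, with $f(-1)=-6$ and $f(1)=0$. The arguments $\cos\frac{m\pi}{N+1}$ range over the set $\{\cos\frac{m\pi}{N+1}:1\le m\le N\}\subset[-\cos\frac{\pi}{N+1},\cos\frac{\pi}{N+1}]\subset(-1,1)$, the two extreme values being attained at $m=1$ and $m=N$. Monotonicity then yields $\lambda_m > f(-1) = -6$ for all $m$ and $\max_m\lambda_m = f(\cos\frac{\pi}{N+1})$; clearing denominators shows $f(\cos\frac{\pi}{N+1}) = -r(N)$ with $r(N)$ as stated, and $r(N)>0$ because $\cos\frac{\pi}{N+1}<1$ makes both factors in its definition positive. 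This proves $\sigma(A^{-1}B)\subset(-6,-r(N)]$.

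Finally, for $L = A^{-1}B\otimes I_N\otimes I_N + I_N\otimes A^{-1}B\otimes I_N + I_N\otimes I_N\otimes A^{-1}B$ I would pass to the tensor eigenbasis: if $A^{-1}Bw^{(p)}=\lambda_p w^{(p)}$ then $w^{(p)}\otimes w^{(q)}\otimes w^{(r)}$ is an eigenvector of $L$ with eigenvalue $\lambda_p+\lambda_q+\lambda_r$, and these $N^3$ vectors form an orthogonal basis of $\mathbb{R}^{N^3}$, so $\sigma(L) = \{\lambda_p+\lambda_q+\lambda_r : 1\le p,q,r\le N\}$; $L$ is symmetric as a sum of symmetric matrices. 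From the previous step, $-6 < \lambda_p,\lambda_q,\lambda_r \le -r(N)$, hence $\lambda_p+\lambda_q+\lambda_r > -18$ and $\lambda_p+\lambda_q+\lambda_r \le -3r(N)$, the upper value being attained at $p=q=r=1$. Therefore $\sigma(L)\subset(-18,-3r(N)]$.

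The one place that needs a little care is the bookkeeping around $f$: verifying monotonicity, checking that the simplified form of $f(\cos\frac{\pi}{N+1})$ is exactly the displayed $r(N)$, and keeping the endpoint status correct (the bound $-6$ open, since $\cos\frac{\pi}{N+1}<1$ strictly, versus $-r(N)$ closed, since it is the eigenvalue at $m=1$, and likewise $-3r(N)$ attained at $p=q=r=1$). Everything else is standard linear algebra through the shared eigenbasis and the Kronecker identities of Lemma \ref{Kronecker property}.
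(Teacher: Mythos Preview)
Your argument is correct and follows the same underlying idea as the paper---simultaneous diagonalisation of $A$ and $B$---but you carry it out more explicitly and with lighter machinery in two places. First, by writing $A = I_N + \tfrac{1}{10}T$ and $B = -\tfrac{12}{5}I_N + \tfrac{6}{5}T$ you make the commutativity and the shared eigenbasis immediate, whereas the paper invokes the general fact that symmetric tridiagonal Toeplitz matrices share eigenvectors and then argues separately that $A^{-1}$ and $B$ commute. Second, for $\sigma(L)$ you exploit that $A_1,A_2,A_3$ pairwise commute (Kronecker factors in disjoint slots) and read off the exact spectrum $\{\lambda_p+\lambda_q+\lambda_r\}$ from the tensor eigenbasis; the paper instead bounds $\sigma(L)$ by appealing to general eigenvalue inequalities for sums of Hermitian matrices (citing \cite{knutson2001honeycombs}), which is considerably heavier than needed here. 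Your monotonicity analysis of $f(c)=12(c-1)/(5+c)$ also makes the endpoint status ($-6$ open, $-r(N)$ and $-3r(N)$ attained) transparent, matching the paper's interval estimate but with a cleaner justification.
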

\begin{proof}
	Note that both $A$ and $B$ are symmetric tridiagonal Toeplitz matrices, then they share common eigenvectors, see \cite{smith1985numerical,noschese2013tridiagonal}. Note that $A$ has distinct eigenvalues, as well as $B$ does. On the other hand, $A^{-1}$ has the same eigenvectors as $A$. Thus $A^{-1}$ and $B$ commute because of common eigenvectors and distinct eigenvalues. Also note that $A^{-1}$ is symmetric since $A$ is. Thus $A^{-1}$ and $B$ are commutative and both symmetric. Then $A^{-1}B$ is symmetric. Thus $A_1$, $A_2$ and $A_3$ are all self-adjoint by Lemma \ref{Kronecker property}.
	
	The spectrum of $A$ and $B$ are given by
	\begin{equation}
		\sigma (A)  = \left\{1+\frac{1}{5}\cos\left(\frac{\pi l}{N+1}\right) \right\}, \ l=1,\dots,N
	\end{equation}
	\begin{equation}
		\sigma (B) = \left\{-\frac{12}{5}+\frac{12}{5}\cos\left(\frac{\pi l}{N+1}\right)	\right\}, \ l=1,\dots,N
	\end{equation}
	Then one can estimate the spectrum of $A^{-1}B$ by (see \cite{horn1991topics})
	\begin{equation}
		\sigma(A^{-1}B)\subset 
		\left[-\frac{\frac{12}{5} + \frac{12}{5}\cos \frac{\pi}{N+1}}{1-\frac{1}{5}\cos \frac{\pi}{N+1}},
		-\frac{\frac{12}{5} - \frac{12}{5}\cos \frac{\pi}{N+1}}{1+\frac{1}{5}\cos \frac{\pi}{N+1}}
		\right]
		\subset (-6,-r(N)].
	\end{equation}
	Then by Lemma \ref{Kronecker eig}, one has
	\begin{equation}
		\sigma(A_1)=\sigma(A_2)=\sigma(A_3)=\sigma(A^{-1}B)\subset (-6,-r(N)].
	\end{equation}
	Finally $L$ is self-adjoint since it is a sum of self-adjoint matrices and the spectrum of $L$ is estimated by (see \cite{knutson2001honeycombs})
	\begin{equation}
	\sigma(L)\subset (-18,-3r(N)].
	\end{equation}
\end{proof}
Thus a coercive condition for the operator $L$ is obtained
\begin{equation}\label{coercive}
	0<m = 3r(N) \leqslant -L \leqslant 18 = M.
\end{equation}
Now one can obtain the main result on the stability
\begin{theorem}
	The new scheme in Section \ref{4th-order in Space} is stable if
	\begin{equation}
		\max_{1\leqslant i,j,k \leqslant N} \frac{\nu_{i,j,k}\cdot\tau}{h} < \frac{\sqrt{2}}{3}.
	\end{equation}
\end{theorem}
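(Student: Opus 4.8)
The plan is to run a discrete energy argument on the consolidated homogeneous scheme \eqref{U eq}, exploiting that all the spatial structure is now carried by the single self-adjoint operator $L=A_1+A_2+A_3$ whose spectrum the preceding lemma has bounded. Write $D=\mathrm{diag}(\Phi)$, a positive-definite diagonal matrix with entries $\Phi_\ell=\tfrac{h^2}{\tau^2\nu_\ell^2}$, so that \eqref{U eq} reads $D\,\delta_t^2U^n=LU^n$, i.e. $D(U^{n+1}-2U^n+U^{n-1})=LU^n$ for $n\ge 0$. The only spectral input will be the coercivity bound \eqref{coercive}, namely $mI\le -L\le MI$ with $m=3r(N)>0$ and $M=18$. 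Throughout I write $\langle\cdot,\cdot\rangle$ for the Euclidean inner product and $\|v\|_D^2=\langle Dv,v\rangle$.

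First I would introduce the discrete energy
\[
E^n:=\|U^n-U^{n-1}\|_D^2-\langle LU^n,U^{n-1}\rangle
\]
and show it is conserved, $E^{n+1}=E^n$ for all $n\ge 0$. Pairing the scheme at level $n$ with $U^{n+1}-U^{n-1}$ and setting $p=U^{n+1}-U^n$, $q=U^n-U^{n-1}$, the left side equals $\langle D(p-q),p+q\rangle=\|p\|_D^2-\|q\|_D^2$ since $D^T=D$, while on the right $L^T=L$ gives $\langle LU^n,U^{n+1}\rangle=\langle LU^{n+1},U^n\rangle$, hence $\langle LU^n,U^{n+1}-U^{n-1}\rangle=\langle LU^{n+1},U^n\rangle-\langle LU^n,U^{n-1}\rangle$; rearranging yields $E^{n+1}=E^n$. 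Therefore $E^n\equiv E^0$, a fixed quantity built from the initial data $U^0=\alpha$ and the ghost value $U^{-1}$ from \eqref{u at -tau}. Note that only the symmetry of $D$ and $L$, not their commutativity, is used here.

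The decisive step is to show that $E^n$ is coercive precisely when the stated CFL bound holds. Put $s=\tfrac12(U^n+U^{n-1})$ and $d=U^n-U^{n-1}$, so $U^n=s+\tfrac12 d$ and $U^{n-1}=s-\tfrac12 d$; a short computation with $L^T=L$ gives $-\langle LU^n,U^{n-1}\rangle=\langle(-L)s,s\rangle+\tfrac14\langle Ld,d\rangle$, whence
\[
E^n=\big\langle (D+\tfrac14 L)\,d,\,d\big\rangle+\big\langle(-L)\,s,\,s\big\rangle .
\]
The second term is $\ge m\|s\|^2\ge 0$. For the first, $-L\le MI$ gives $D+\tfrac14 L\ge D-\tfrac{M}{4}I=\mathrm{diag}\big(\Phi_\ell-\tfrac{9}{2}\big)$, which is positive definite if and only if $\min_\ell\Phi_\ell>\tfrac{9}{2}$, i.e. $\tfrac{h^2}{\tau^2\nu_\ell^2}>\tfrac{9}{2}$ for every $\ell$, i.e. $\max_{1\le i,j,k\le N}\tfrac{\nu_{i,j,k}\tau}{h}<\tfrac{\sqrt2}{3}$, which is exactly the hypothesis. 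Under this hypothesis $E^n\ge\varepsilon\|d\|^2+m\|s\|^2$ with $\varepsilon=\min_\ell\Phi_\ell-\tfrac{9}{2}>0$; since $U^n=s+\tfrac12 d$, this bounds $\|U^n\|^2$ by a fixed constant multiple of $E^n=E^0$, uniformly in $n$, which is the asserted stability.

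I expect the main obstacle to be this coercivity step: the conserved quantity $E^n$ is a genuine nonnegative energy only under a CFL restriction, and obtaining the \emph{sharp} restriction $\tfrac{\sqrt2}{3}$ requires the exact identity $E^n=\langle(D+\tfrac14 L)d,d\rangle+\langle(-L)s,s\rangle$ rather than any cruder lower bound for the cross term. In particular the factor $\tfrac14$ (not $\tfrac12$) coming from the symmetric average--difference splitting is essential, since it is precisely the $M/4$ appearing in $D-\tfrac{M}{4}I\ge 0$ that produces $\tfrac{\sqrt2}{3}$ rather than the more restrictive $\tfrac13$. A secondary point worth stating explicitly is that the energy identity, as with any linear stability statement of this kind, is for the purely homogeneous problem (zero source and zero boundary data); the general case then follows by superposition.
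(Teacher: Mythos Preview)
Your proof is correct and is essentially the same argument as the paper's: you pair the scheme with $U^{n+1}-U^{n-1}$, obtain a conserved quantity, and deduce coercivity under the CFL condition $\Phi_{\min}>M/4$. In fact your energy $E^n=\|d\|_D^2-\langle LU^n,U^{n-1}\rangle$ coincides exactly with the paper's $R^n=\langle\Phi\circ\Gamma^n,\Gamma^n\rangle+\tfrac14\langle L\Gamma^n,\Gamma^n\rangle-\tfrac14\langle L(U^n+U^{n-1}),U^n+U^{n-1}\rangle$ once you substitute $\Gamma^n=d$ and $U^n+U^{n-1}=2s$, so the two presentations differ only cosmetically; your average--difference splitting makes the $\tfrac14$ factor and the resulting sharp constant more transparent.
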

\begin{proof}
	The idea comes from \cite{britt2018high}. Denoted by $\| \cdot\|$ the $L^2$ norm, and $\langle\cdot,\cdot\rangle$ the $L^2$ inner product. From the above discussion, the new scheme is equivalent to
	\begin{equation}\label{U in proof}
		\Phi \circ \delta_t^2 U^n = LU^n
	\end{equation}
	where $\Phi$ is the vector form of $\frac{h^2}{\tau^2} \frac{1}{\nu^2_{i,j,k}}$.
	
	Let $\Gamma^n = U^n - U^{n-1}$, then $\delta_t^2 U^n = \Gamma^{n+1} - \Gamma^n$. Now the equation (\ref{U in proof}) can be written as
	\begin{equation}\label{Gamma in proof}
		\Phi \circ (\Gamma^{n+1} - \Gamma^n) = LU^n.
	\end{equation}
	Consider the $L^2$ inner product of both sides of (\ref{Gamma in proof}) with $U^{n+1} - U^{n-1} = \Gamma^{n+1}+\Gamma^n$
	\begin{equation}\label{inner product}
		\langle \Phi \circ (\Gamma^{n+1} - \Gamma^n),U^{n+1} - U^{n-1}\rangle = \langle LU^n,U^{n+1} - U^{n-1}\rangle.
	\end{equation}
	Since $L$ is self-adjoint, the right-hand-side of (\ref{inner product}) can be expanded as
	\begin{equation}\label{RHS inner product}
		\begin{split}
		\langle LU^n,U^{n+1} - U^{n-1}\rangle 
		=& \frac{1}{4}\big[\langle L\Gamma^n,\Gamma^n \rangle - \langle L (U^n+U^{n-1}),(U^n+U^{n-1})\rangle
		\big] \\
		-& \frac{1}{4}\big[\langle L\Gamma^{n+1},\Gamma^{n+1} \rangle - \langle L (U^{n+1}+U^{n}),(U^{n+1}+U^{n})\rangle
		\big].
		\end{split}
	\end{equation}
	For the left-hand-side of (\ref{inner product}) one has
	\begin{equation}\label{LHS inner product}
		\begin{split}
		&\langle \Phi \circ (\Gamma^{n+1} - \Gamma^n),U^{n+1} - U^{n-1}\rangle \\
		 =& \langle \Phi \circ (\Gamma^{n+1} - \Gamma^n),\Gamma^{n+1} + \Gamma^n\rangle \\
		 =& \langle \Phi \circ \Gamma^{n+1},\Gamma^{n+1} \rangle - \langle \Phi \circ \Gamma^{n},\Gamma^{n} \rangle.
		\end{split}
	\end{equation}
	Let
	\begin{equation}
		R^{n} = \langle \Phi \circ \Gamma^{n},\Gamma^{n} \rangle + \frac{1}{4}\langle L \Gamma^{n},\Gamma^{n}\rangle - \frac{1}{4}\langle L (U^{n}+U^{n-1}),(U^{n}+U^{n-1})\rangle
	\end{equation}
	then (\ref{inner product})(\ref{LHS inner product})(\ref{RHS inner product}) result in the identity
	\begin{equation}
		R^{n+1} = R^{n}.
	\end{equation}
	Recall the coercive condition of $L$ in (\ref{coercive}), one has
	\begin{equation}
		R^n \geqslant \Phi_{min} \|\Gamma^n\|^2 - \frac{M}{4}\|\Gamma^n\|^2 + \frac{m}{4} \|U^{n}+U^{n-1}\|^2,
	\end{equation}
	and
	\begin{equation}
	R^n \leqslant \Phi_{max} \|\Gamma^n\|^2 - \frac{m}{4}\|\Gamma^n\|^2 + \frac{M}{4} \|U^{n}+U^{n-1}\|^2.
	\end{equation}
	Thus if
	\begin{equation}\label{3.37}
		\Phi_{min} - \frac{M}{4} > 0
	\end{equation}
	then $R^n$ is equivalent to the energy given by
	\begin{equation}
		\|\Gamma^n\|^2 + \|U^{n}+U^{n-1}\|^2 = 2\|U^n\|^2 +2\|U^{n-1}\|^2.
	\end{equation}
	Since
	\begin{equation}
		\Phi_{min} = \frac{h^2}{\tau^2}\cdot \frac{1}{(\nu^2_{i,j,k})_{max}}
	\end{equation}
	then (\ref{3.37}) becomes
	\begin{equation}\label{stability square}
		\max_{i,j,k} \nu_{i,j,k}^2 \cdot \frac{\tau^2}{h^2} < \frac{4}{M} = \frac{4}{18}
	\end{equation}
	i.e.
	\begin{equation}\label{stability}
		\max_{i,j,k} \nu_{i,j,k} \cdot \frac{\tau}{h} < \frac{\sqrt{2}}{3}.
	\end{equation}
	In this case, denoted by $e^n$ the error at time step $t_n$, the above stability analysis shows that the energy of the error $\|e^n\|^2 + \|e^{n-1}\|^2$ conserves during the solving process, which means the scheme is stable if (\ref{stability}) is satisfied.
\end{proof}
\begin{remark}
	The scheme
	\begin{equation*}
		\frac{1}{\Delta_x^2}\big(\frac{6}{5}v_{i-1} - \frac{12}{5}v_i + \frac{6}{5}v_{i+1}\big) = \frac{1}{10} v''_{i-1} + v''_{i} + \frac{1}{10} v''_{i+1}
	\end{equation*}
	can be written as
	\begin{equation*}
		\frac{1}{\Delta_x^2}\delta_x^2 v = \big(1+\frac{1}{12}\big)\delta_x^2 v''
	\end{equation*}
	which is formally
	\begin{equation*}
		\frac{1}{\Delta_x^2} \frac{\delta_x^2}{1+\frac{1}{12}\delta_x^2} v=v''.
	\end{equation*}
	Thus the stability analysis in this section is consistent with the result in \cite{li2018compacthighorder}
\end{remark}

\section{Higher Order Temporal Accuracy}
The new scheme in Section \ref{4th-order in Space} is 4th-order accurate in space. However, the total truncation error is only of 2nd-order in time as $O(\tau^2)+O(h_x^4)+O(h_y^4)+O(h_z^4)$. In general, it is desirable to get higher order accuracy in time as well \cite{chen2007high}. This section reviews several methods that can improve the temporal accuracy from 2nd-order $O(\tau^2)$ to 4th-order $O(\tau^4)$. 

Pad\'{e} approximation replaces the 2nd-order centre difference in time $\frac{1}{\tau^2}\delta_t^2$ by the 4th-order approximation $\frac{1}{\tau^2}\cdot \frac{\delta_t^2}{1+\frac{1}{12}\delta_t^2}$, then the scheme becomes
\begin{equation}\label{Pade 0}
	\frac{\delta_t^2}{1+\frac{1}{12}\delta_t^2} u^{n}_{i,j,k} = \tau^2 [\nu^2_{i,j,k}(\Delta u)^{n}_{i,j,k}+s^n_{i,j,k}].
\end{equation}
Multiply the both sides of (\ref{Pade 0}) by $1+\frac{1}{12}\delta_t^2$, one obtains
\begin{equation}\label{Pade 1}
	\delta_t^2 u^{n}_{i,j,k} = \tau^2 [\nu^2_{i,j,k}(1+\frac{1}{12}\delta_t^2)(\Delta u)^{n}_{i,j,k}+(1+\frac{1}{12}\delta_t^2)s^n_{i,j,k}].
\end{equation}
It has been proved in \cite{li2018compacthighorder} that the scheme (\ref{Pade 1}) has a slightly better CFL constant, which comes from the fact that Pad\'{e} approximation in time $\frac{1}{\tau^2}\cdot \frac{\delta_t^2}{1+\frac{1}{12}\delta_t^2}$ improves the constant $\frac{4}{M}$ in (\ref{stability square}) to $\frac{6}{M}$ which leads to a CFL constant $\frac{\sqrt{3}}{3}$. However, the trade off is that the scheme (\ref{Pade 1}) is implicit which is expensive to solve for $u_{i,j,k}^{n+1}$ directly. An iterative method is needed there to solve the large sparse linear system.

One can also apply Richardson Extrapolation to improve the temporal accuracy. Denoted by $NS(T;\tau,h)$, the numerical solution of (\ref{acoustic eq}) solved by the new scheme with grid size $h$ and time step $\tau$, evaluated at $t = T$. Then the Richardson Extrapolation of the numerical solution evaluated at $t=T$ is given by
\begin{equation}
RE(T;\tau,h) = \frac{2^l NS(T;\frac{1}{2}\tau,h)-NS(T;\tau,h)}{2^l - 1},\ l = 2,3,4,\cdots.
\end{equation}

The Runge-Kutta Method is a different approach which works only for a system of  first oder ordinary differential equations. Therefore, one has to rewrite the equation (\ref{acoustic eq}) as a first-order system, then apply and explicit Runge-Kutta method and obtain
\begin{equation}
\begin{cases}
\mathcal{U}^{n+1} = \mathcal{U}^{n}+ \tau \sum_{l=1}^{q} b_l \mathcal{K}_l\\
\mathcal{K}_l =\mathcal{A}\left(\mathcal{U}^n + \tau\sum_{p=1}^{q-1}a_{lp}\mathcal{K}_p\right) + \mathcal{S}(t_n + c_l \tau) \\
\end{cases}
\end{equation}
with
\begin{equation}
\mathcal{U} = 
\begin{pmatrix}
u \\ u_t
\end{pmatrix},\ 
\mathcal{A} = 
\begin{pmatrix}
0 & 1\\
\nu^2 \Delta & 0\\
\end{pmatrix},\ 
\mathcal{S} = 
\begin{pmatrix}
0 \\ s(t,x,y,z)
\end{pmatrix},\ 
\mathcal{K}_l = 
\begin{pmatrix}
K_{l,1} \\ K_{l,2}
\end{pmatrix}
\end{equation}
and some constants $a_{lp}$, $b_l$, $c_l$. Note that those constants are different from what appear in Section \ref{4th-order in Space}.
%
Here $\Delta \mathcal{K}_{l,1}$ is approximated by the same method as (\ref{u_xx eq})(\ref{u_yy eq})(\ref{u_zz eq}) to ensure 4th-order
accuracy in space. Thus the boundary value $\mathcal{K}_{l-1}|_{\Omega}$ is necessary, which can be obtained by the same method used for $(\Delta u)|_{\Omega}$ in (\ref{u_xx 0}), see Appendix \ref{Appendix:RK4}.
One numerical example has been solved to show that both Richardson extrapolation and Runge-Kutta methods can improve the order of accuracy to 4th-order. It is worthy to mention that extra caution should be taken when the two methods are used, as they might bring in some stability issues.

\section{Numerical Experiments}
In this section, three numerical experiments are conducted with the new scheme to demonstrate the efficiency and accuracy. The first example verifies the 2nd-order in time and 4th-order in space accuracy. The second example compares the performance of Richardson Extrapolation and Runge-Kutta Methods in improving temporal accuracy to 4th-order. The third example considers a more realistic problem, solving an underground acoustic model with Ricker wavelet source. 
\subsection{Example 1}\label{e.g.:1}
This example solves the acoustic wave equation defined on the domain $\Omega = [0,1]\times[0,1]\times[0,1]$ and $t \in [0, T]$
\begin{equation}
	u_{tt} = \nu^2 \Delta u + s,
\end{equation}
where
\begin{equation}
	\nu^2 = \frac{1}{(x-\frac{1}{2})(y-\frac{1}{2})(z-\frac{1}{2})+\frac{1}{6}}
\end{equation}
and
\begin{equation}
	s = (4-14\nu^2)e^{2t}e^{x+2y+3z}
\end{equation}
with initial and Dirichlet boundary conditions compatible to the analytical solution which is given by
\begin{equation}
	u = e^{2t}e^{x+2y+3z}.
\end{equation}
In order to validate that the new scheme is  2nd-order in time and 4th-order in space, let $h_x = h_y = h_z = h$, and set the grid size $h$ and time step $\tau$ to satisfy $\tau = h^2$. The errors in max norm and energy norm with different $h$ are listed in Table \ref{table:1}, which shows that the new scheme is indeed of error $O(\tau^2) + O(h^4)$. Here the convergence order is calculated by 
\begin{equation}
	\text{Conv. Order} = \frac{\log\big[E(h_1)/E(h_2)\big]}{\log(h_1/h_2)}.
\end{equation}
\begin{remark}
	Since $\tau = h^2$ in this case, it is actually verified that the new scheme is of accuracy $O(h^4)$. Note that the number of time steps is of order $O(\tau^{-1}) = O(h^{-2})$, and the approximation of $u(-\tau)$ is of order $O(\tau^4) = O(h^8)$, see (\ref{u at -tau}), thus the aggregate error from $u(-\tau)$ is of order $O(h^{-2}h^{8}) = O(h^6) \ll O(h^4)$.
\end{remark}
\begin{table}
	\centering
	\caption{Numerical errors in max norm and energy norm for Example \ref{e.g.:1} with $\tau = h^2$}
	\begin{tabular}{|c|c|c|c|c|}
		\hline
		h & 1/10 & 1/15 & 1/20 & 1/25  \\
		\hline
		$E_{max}$ & 0.0047& 9.5748e-04 & 3.0609e-04 & 1.2605e-04 	\\
		\hline
		$E_{energy}$ & 0.0020 & 3.8709e-04 & 1.1990e-04 & 4.8467e-05	\\
		\hline
		Conv. Order (max)	& - & 3.9239 & 3.9642 & 3.9759 	\\
		\hline
		Conv. Order (energy)	& - & 4.0503 & 4.0739 & 4.0592	\\
		\hline
	\end{tabular}

	\label{table:1}
\end{table}
\subsection{Example 2}\label{e.g.:compare RK and RE}
This example compares Richardson Extrapolation (RE) and Runge-Kutta Method (RK) in increasing temporal accuracy to 4th-order. Consider the 3D 
acoustic wave equation defined  on $\Omega = [0,1]\times[0,1]\times[0,1]$ and  $t \in [0, T]$.  
\begin{equation}\label{eq:compare}
	\begin{cases}
	u_{tt} = (1+xyz)\Delta u + s(t,x,y,z) \\
	u|_{t=0} = \sin(\pi x)\sin(\pi y)\sin(\pi z)\\
	u_t|_{t=0} = \pi \sin(\pi x)\sin(\pi y)\sin(\pi z)\\
	u|_{\partial \Omega} = 0
	\end{cases}
\end{equation}
with source
\begin{equation}
	s(t,x,y,z) = (4+3xyz)\pi^2 e^{\pi t}\sin(\pi x)\sin(\pi y)\sin(\pi z).
\end{equation}
The analytical solution is given by
\begin{equation}
	u = e^{\pi t}\sin(\pi x)\sin(\pi y)\sin(\pi z).
\end{equation}
For simplicity,  let $h_x = h_y = h_z = h$ and $\tau = \frac{h}{10}$ in all test cases.
Denoted by $NS(T;\tau,h)$, the numerical solution of (\ref{eq:compare}) solved by the new scheme with grid size $h$ and time step $\tau$, evaluated at $t = T$. Then the Richardson Extrapolation of the numerical solution evaluated at $t=T$ is given by
\begin{equation}
	RE(T;\tau,h) = \frac{4 NS(T;\frac{1}{2}\tau,h)-NS(T;\tau,h)}{3}.
\end{equation}
Rewrite the equation (\ref{eq:compare}) as a first-order system
\begin{equation}
	\mathcal{U}_t = \mathcal{A}\mathcal{U} + \mathcal{S}(t)
\end{equation}
with 
\begin{equation}
\mathcal{U} = 
\begin{pmatrix}
u \\ u_t
\end{pmatrix},\ 
\mathcal{A} = 
\begin{pmatrix}
0 & 1\\
(1+xyz) \Delta & 0\\
\end{pmatrix},\ 
\mathcal{S}(t) = 
\begin{pmatrix}
0 \\ s(t)
\end{pmatrix},\ 
\mathcal{K}_l = 
\begin{pmatrix}
K_{l,1} \\ K_{l,2}
\end{pmatrix}.
\end{equation}
Then one can consider the conventional 4th-order Runge-Kutta Method (RK4)
\begin{equation}
	\begin{cases}
	\mathcal{U}^{n+1} = \mathcal{U}^{n} + \frac{\tau}{6}\left(\mathcal{K}_1 + \frac{1}{2}\mathcal{K}_2 + \frac{1}{2}\mathcal{K}_3 + \mathcal{K}_4 \right)\\
	\mathcal{K}_1 = \mathcal{A}\mathcal{U}^n + \mathcal{S}(t_n)\\
	\mathcal{K}_2 = \mathcal{A}\left(\mathcal{U}^n + \frac{\tau}{2}\mathcal{K}_1\right) + \mathcal{S}(t_n + \frac{1}{2}\tau)\\
	\mathcal{K}_3 = \mathcal{A}\left(\mathcal{U}^n + \frac{\tau}{2}\mathcal{K}_2\right) + \mathcal{S}(t_n + \frac{1}{2}\tau)\\
	\mathcal{K}_4 = \mathcal{A}\left(\mathcal{U}^n + \tau\mathcal{K}_3\right) + \mathcal{S}(t_n + \tau)
	\end{cases}.
\end{equation}
Both RE and RK4 should improve the temporal accuracy from $O(\tau^2)$ to $O(\tau^4)$, which leads to an overall accuracy $O(h^4)$. Denoted by $E_{RE}$ the error of RE, and $E_{RK4}$ the error of RK4, Table \ref{table:compare} compares those two methods in energy norm.
\begin{table}
	\centering
	\caption{Comparison of RE and RK4 in Example \ref{e.g.:compare RK and RE} in energy norm, with $\tau = \frac{h}{10}$.}
	\begin{tabular}{|c|c|c|c|c|}
		\hline
		h & 1/10 & 1/15 & 1/20 & 1/25  \\
		\hline
		$E_{RE}$ & 2.9340e-04 & 5.4765e-05 & 1.6862e-05 & 6.7968e-06	\\
		\hline
		$E_{RK4}$ & 2.9327e-04 & 5.4734e-05 & 1.6852e-05 & 6.7924e-06	\\
		\hline
		RE Conv. Order	& - & 4.1397 & 4.0948 & 4.0719	\\
		\hline
		RK4 Conv. Order	& - & 4.1400 & 4.0949& 4.0721	\\
		\hline
		RE CPU time (s) & 0.175715 & 0.674665 & 1.678161 & 3.572069 \\
		\hline
		RK4 CPU time (s) & 0.271528 & 0.966194 & 2.412038 & 5.024197 \\
		\hline
	\end{tabular}
	\label{table:compare}
\end{table}
From Table \ref{table:compare} one can see that RK4 has slightly better accuracy than RE, but it spends about 50\% more CPU time. The reason is stated below. Comparing to the original scheme RE does an additional solving process for $\frac{\tau}{2}$, which leads to an overall CPU time about 3 times as much as the original one. However, RK4 used here requires additionally approximating four $\Delta \mathcal{K}_l$'s in each time step which leads to an overall CPU time about 5 times as much as the original one.

\subsection{Example 3}\label{e.g.:ricker's}
This example solves a more realistic problem in which the seismic wave is generated by a Ricker wavelet source. The region is a three-dimensional domain $\Omega = [0m,1200m]\times[0m,1200m]\times[0m,1350m]$. The velocity is given by
\begin{equation}
	\nu (x,y,z) = 
	\begin{cases}
	1200\ m/s,\text{if } 0 \leqslant z \leqslant 879.75 \\
	2500\ m/s,\text{if } 879.75 < z \leqslant 1350
	\end{cases}.
\end{equation}
This can be regarded as that the region is divided in two parts. From the ground surface to $879.75m$ underground is soil with sound speed $\nu = 1200m/s$, and from $879.75m$ underground to $1350m$ underground is rock with sound speed $\nu = 2500m/s$. The underground model is sketched in Figure \ref{fig:underground}. The Ricker wavelet source is given by
\begin{equation}
	s(t,x,y,z) = \delta (x-x_s,y-y_s,z-z_s)[1-2\pi^2 f^2_p (t-d_r)^2]e^{-\pi^2 f_p^2 (t-d_r)^2}
\end{equation}
with dominant frequency $f_p = 10 Hz$, temporal delay $d_r = 0.5/f_p$. The wave generator is located at $(x_s,y_s,z_s) = (600m,600m,600m)$, which is the source location in the soil area. The time step $\tau = 0.0005s$ and grid size $h = 5m$ are chosen to satisfy Nyquist Theorem on spatial resolution and the stability condition
given in  (\ref{stability})
\begin{equation}
	\nu_{max} \frac{\tau}{h} < \frac{\sqrt{2}}{3}.
\end{equation}
\begin{figure}[h]
	\includegraphics[width=4cm]{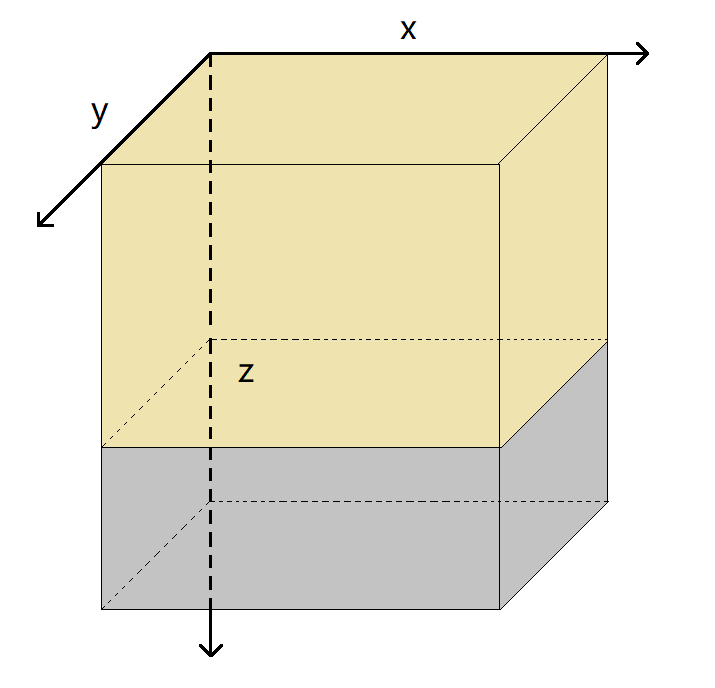}
	\caption{Underground model for Example \ref{e.g.:ricker's}, the yellow part is soil with sound speed $\nu = 1200m/s$, the gray part is rock with sound speed $\nu = 2500m/s$.}
	\label{fig:underground}
\end{figure}

Four snapshots of y-section at $y = y_s$ are plotted. Note that the distance between the source and the rock area is $879.75m - 600m = 279.75m$, thus the wave will reach the rock area at $t = \frac{279.75}{1200}s = 0.233125s$. Figure \ref{fig:wave1} shows that at $t = 0.225s$ the y-section of the wave is still a circle. Figure \ref{fig:wave2} shows that at $t = 0.375s$ the wave has reached the rock area, thus both reflection and refraction occur when the wave crosses the interface of the two media at $z = 897.75m$. Figure \ref{fig:042} shows the wave has been reflected back from bottom boundary at $t = 0.42s$. Figure \ref{fig:066} shows the reflected waves at $t = 0.66s$. Note that on Figure \ref{fig:066}, the wave reflected from bottom boundary has crossed the interface of soil and rock, thus it shows a different shape from Figure \ref{fig:042} due to the refraction.
\begin{figure}[h]
	\includegraphics[width=12cm]{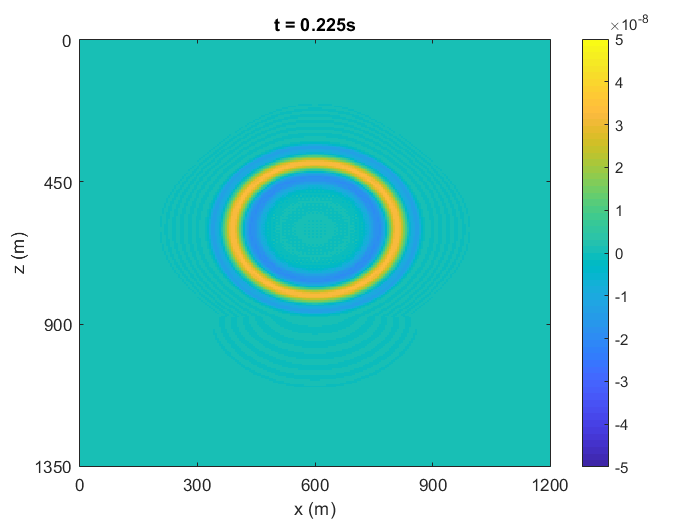}
	\caption{Snapshot of y-section at $t = 0.225s$ in Example \ref{e.g.:ricker's}}
	\label{fig:wave1}
\end{figure}
\begin{figure}[h]
	\includegraphics[width=12cm]{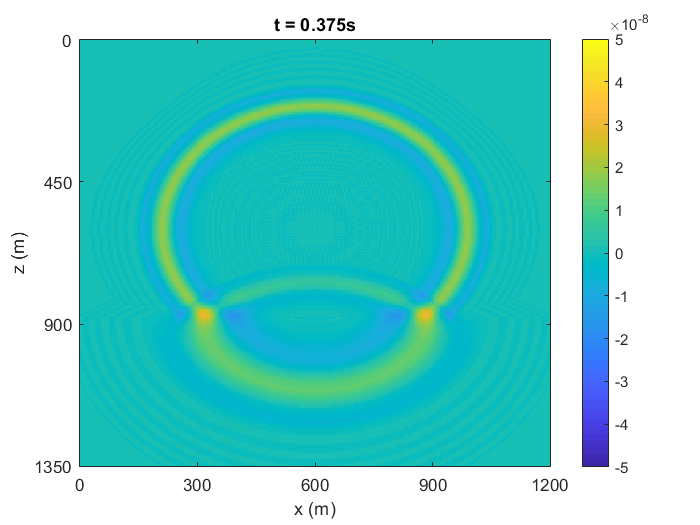}
	\caption{Snapshot of y-section at $t = 0.375s$ in Example \ref{e.g.:ricker's}}
	\label{fig:wave2}
\end{figure}
\begin{figure}[h]
	\includegraphics[width=12cm]{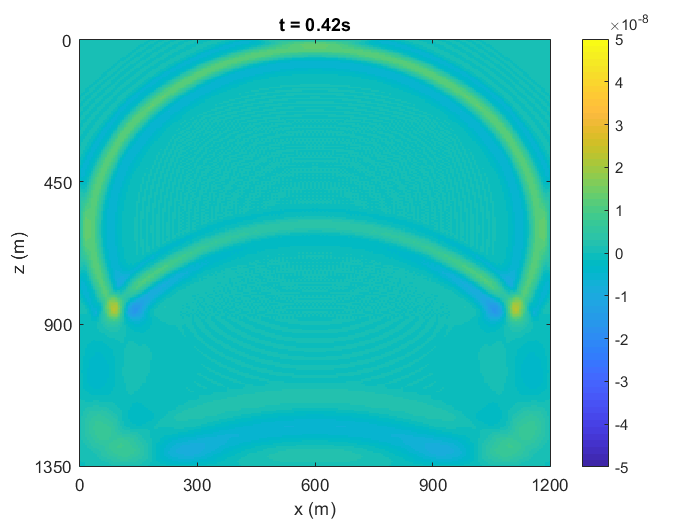}
	\caption{Snapshot of y-section at $t = 0.42s$ in Example \ref{e.g.:ricker's}}
	\label{fig:042}
\end{figure}
\begin{figure}[h]
	\includegraphics[width=12cm]{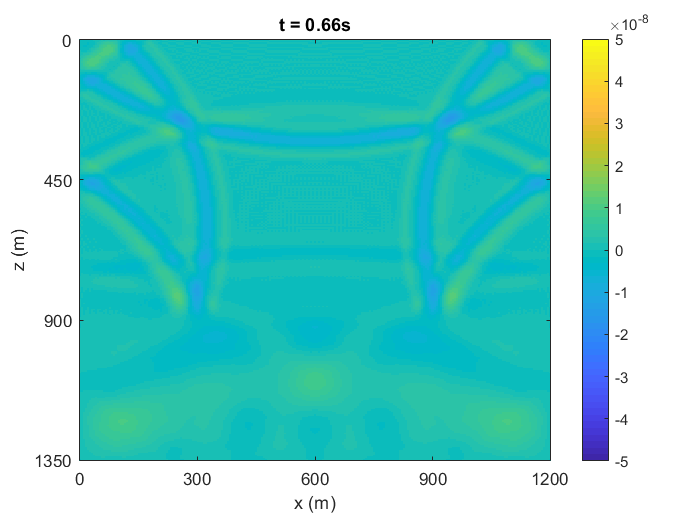}
	\caption{Snapshot of y-section at $t = 0.66s$ in Example \ref{e.g.:ricker's}}
	\label{fig:066}
\end{figure}

There are also some spurious waves propagating faster than the true wave on those figures. One may have to zoom in the figures to find them. They are inconspicuous and neglectable comparing to the true wave. Their occurrence is resulted from the way how $\Delta u$ is approximated in this new method. For simplicity consider an interval $\Omega = [-2,2]$ and a smooth function $v$ supported on $[-1,1]$. The second derivative on $\Omega$ is approximated by
\begin{equation}\label{error}
v_{xx} = A^{-1}Bv
\end{equation}
where $A$ and $B$ are given by (\ref{matrix A and B}).  The matrix $A^{-1}$ will spread a little part of $v$ to its second derivative outside of the support $[-1,1]$, which makes the approximation of $v_{xx}$ non-zero on $[-2,-1]$ and $[1,2]$.

\section{Conclusion}
In this work a compact explicit 2nd-order in time and 4th-order in space FDM has been developed to solve acoustic equations with variable acoustic velocity. The new scheme has a time complexity which is linear to the total number of  grid points  for each time step. This new scheme is conditionally stable with a slightly lower CFL condition. Two numerical experiments are conducted for the new scheme to validate the convergence order and the compatibility with Richardson Extrapolation and 4th-order Runge-Kutta Method. A more realistic problem has been solved as the third numerical example  to show that the method is accurate and efficient for numerical simulation of acoustic wave propagation in 3D heterogeneous media. The new scheme is expected to find wide applications in numerical seismic modelling and related areas.

In the future, the authors plan to generalize this new scheme to acoustic wave equations with spatial variable density, in which the Laplacian $\Delta u$ is replaced by $\nabla \cdot \left(\frac{1}{\rho}\nabla\right)u$, with density $\rho = \rho(x,y,z)$. Moreover, more boundary conditions such as absorbing
boundary and perfectly matched layer (PML) will be considered.
\appendix
\section{Details of the New Scheme}\label{The Complete Scheme}
The numerical scheme for equation (\ref{acoustic eq}) in Section \ref{4th-order in Space} is given by
\begin{equation}
u^{n+1}_{i,j,k} = \tau^2 [\nu^2_{i,j,k}(\Delta u)^{n}_{i,j,k}+s^n_{i,j,k} ]+2 u^{n}_{i,j,k} - u^{n-1}_{i,j,k}
\end{equation}
for $n\geqslant 0$ and $1\leqslant i\leqslant N_x$, $1\leqslant j\leqslant N_y$, $1\leqslant k\leqslant N_z$, with
\begin{equation}
	\begin{split}
	u^{-1}_{i,j,k} = & \alpha_{i,j,k} - \tau \beta_{i,j,k} + \frac{\tau^2}{2}[\nu^2_{i,j,k}(\Delta \alpha)_{i,j,k}+s^0_{i,j,k}] \\
	- &\frac{{\tau}^3}{6} [\nu^2_{i,j,k}(\Delta \beta)_{i,j,k} + ({\partial}_t s)_{i,j,k}^0 ] +O(\tau^4)
	\end{split}
\end{equation}
and
\begin{equation}\label{u_xx Appendix}
(u_{xx})_{*,j,k}^n =A_x^{-1} \left[\frac{1}{h_x^2} \left( B_x u_{*,j,k}^n+b_1q_x^{b,n} \right)- a_1q_x^{a,n}\right]
\end{equation}
\begin{equation}\label{u_yy Appendix}
(u_{yy})_{i,*,k}^n =A_y^{-1} \left[\frac{1}{h_y^2} \left(B_y u_{i,*,k}^n+b_1q_y^{b,n} \right)- a_1q_y^{a,n}\right]
\end{equation}
\begin{equation}\label{u_zz Appendix}
(u_{zz})_{i,j,*}^n =A_z^{-1} \left[\frac{1}{h_z^2} \left(B_z u_{i,j,*}^n+b_1q_z^{b,n} \right)- a_1q_z^{a,n}\right]
\end{equation}
where
\begin{equation}
A_\lambda = 
\begin{pmatrix}
a_0 & a_1 \\
a_1 & a_0 & a_1\\
& \dots &\dots\\
& a_1 & a_0 & a_1\\
&	  & a_1 & a_0
\end{pmatrix}_{N_\lambda \times N_\lambda},\ 
B_\lambda = 
\begin{pmatrix}
b_0 & b_1 \\
b_1 & b_0 & b_1\\
& \dots &\dots\\
& b_1 & b_0 & b_1\\
&	  & b_1 & b_0
\end{pmatrix}_{N_\lambda \times N_\lambda}
\end{equation}
for $\lambda \in \{x,y,z\}$ and
\begin{equation}
u_{*,j,k}^n =
\begin{pmatrix}
u_{1,j,k}^n\\
u_{2,j,k}^n\\
\vdots\\
u_{N_x,j,k}^n
\end{pmatrix}_{N_x \times 1},\ 
(u_{xx})_{*,j,k}^n =
\begin{pmatrix}
(u_{xx})_{1,j,k}^n\\
(u_{xx})_{2,j,k}^n\\
\vdots\\
(u_{xx})_{N_x,j,k}^n
\end{pmatrix}_{N_x \times 1},
\end{equation}
\begin{equation}
u_{i,*,k}^n =
\begin{pmatrix}
u_{i,1,k}^n\\
u_{i,2,k}^n\\
\vdots\\
u_{i,N_y,k}^n
\end{pmatrix}_{N_y \times 1},\ 
(u_{yy})_{i,*,k}^n =
\begin{pmatrix}
(u_{yy})_{i,1,k}^n\\
(u_{yy})_{i,2,k}^n\\
\vdots\\
(u_{yy})_{i,N_y,k}^n
\end{pmatrix}_{N_y \times 1},
\end{equation}
\begin{equation}
u_{i,j,*}^n =
\begin{pmatrix}
u_{i,j,1}^n\\
u_{i,j,2}^n\\
\vdots\\
u_{i,j,N_z}^n
\end{pmatrix}_{N_z \times 1},\ 
(u_{zz})_{i,j,*}^n =
\begin{pmatrix}
(u_{zz})_{i,j,1}^n\\
(u_{zz})_{i,j,2}^n\\
\vdots\\
(u_{zz})_{i,j,N_z}^n
\end{pmatrix}_{N_z \times 1}.
\end{equation}
The $q$-vectors for boundary values in (\ref{u_xx Appendix})(\ref{u_yy Appendix})(\ref{u_zz Appendix}) are given by
\begin{equation}
q_x^{a,n} =
\begin{pmatrix}
(u_{xx})_{0,j,k}^n \\
0\\
\vdots\\
0\\
(u_{xx})_{N_x+1,j,k}^n
\end{pmatrix}_{N_x \times 1},\ 
q_x^{b,n} =
\begin{pmatrix}
(f_0)_{j,k}^n \\
0\\
\vdots\\
0\\
(f_1)_{j,k}^n
\end{pmatrix}_{N_x \times 1},
\end{equation}
\begin{equation}
q_y^{a,n} =
\begin{pmatrix}
(u_{yy})_{i,0,k}^n \\
0\\
\vdots\\
0\\
(u_{yy})_{i,N_y+1,k}^n
\end{pmatrix}_{N_y \times 1},\ 
q_y^{b,n} =
\begin{pmatrix}
(g_0)_{i,k}^n \\
0\\
\vdots\\
0\\
(g_1)_{i,k}^n
\end{pmatrix}_{N_y \times 1},
\end{equation}
\begin{equation}
q_z^{a,n} =
\begin{pmatrix}
(u_{zz})_{i,j,0}^n \\
0\\
\vdots\\
0\\
(u_{zz})_{i,j,N_z +1}^n
\end{pmatrix}_{N_z \times 1},\ 
q_x^{b,n} =
\begin{pmatrix}
(h_0)_{i,j}^n \\
0\\
\vdots\\
0\\
(h_1)_{i,j}^n
\end{pmatrix}_{N_z \times 1}.
\end{equation}
The boundary values for $u_{xx}$, $u_{yy}$ and $u_{zz}$ above are given by
\begin{equation}
(u_{xx})_{0,j,k}^n = \frac{1}{\nu^2_{0,j,k}}\big[(\partial_t^2 f_0)_{j,k}^n - s_{0,j,k}^n\big] - (\partial_y^2 f_0)_{j,k}^n - (\partial_z^2 f_0)_{j,k}^n
\end{equation}
\begin{equation}
(u_{xx})_{N_x+1,j,k}^n = \frac{1}{\nu^2_{N_x+1,j,k}}\big[(\partial_t^2 f_1)_{j,k}^n - s_{N_x+1,j,k}^n\big] - (\partial_y^2 f_1)_{j,k}^n - (\partial_z^2 f_1)_{j,k}^n
\end{equation}
\begin{equation}
(u_{yy})_{i,0,k}^n = \frac{1}{\nu^2_{i,0,k}}\big[(\partial_t^2 g_0)_{i,k}^n - s_{i,0,k}^n\big] - (\partial_x^2 g_0)_{i,k}^n - (\partial_z^2 g_0)_{i,k}^n
\end{equation}
\begin{equation}
(u_{yy})_{i,N_y+1,k}^n = \frac{1}{\nu^2_{i,N_y+1,k}}\big[(\partial_t^2 g_1)_{i,k}^n - s_{i,N_y+1,k}^n\big] - (\partial_x^2 g_1)_{i,k}^n - (\partial_z^2 g_1)_{i,k}^n
\end{equation}
\begin{equation}
(u_{zz})_{i,j,0}^n = \frac{1}{\nu^2_{i,j,0}}\big[(\partial_t^2 h_0)_{i,j}^n - s_{i,j,0}^n\big] - (\partial_x^2 h_0)_{i,j}^n - (\partial_y^2 h_0)_{i,j}^n
\end{equation}
\begin{equation}
(u_{zz})_{i,j,N_z+1}^n = \frac{1}{\nu^2_{i,j,N_z+1}}\big[(\partial_t^2 h_1)_{i,j}^n - s_{i,j,N_z+1}^n\big] - (\partial_x^2 h_1)_{i,j}^n - (\partial_y^2 h_1)_{i,j}^n.
\end{equation}

\section{Example on Boundary Values in Runge-Kutta Methods}\label{Appendix:RK4}
This section briefly shows how to deal with the boundary values in implementing Runge-Kutta Methods by an example. To ensure 4th-order spatial accuracy, all of the Laplacian $\Delta$ appears below should be considered as approximation by the same method as (\ref{u_xx eq})(\ref{u_yy eq})(\ref{u_zz eq}). Thus some boundary values of $\mathcal{K}$'s are necessary.

Consider the following RK4
\begin{equation}
\begin{cases}
\mathcal{U}^{n+1} = \mathcal{U}^{n} + \frac{\tau}{6}\left(\mathcal{K}_1 + \frac{1}{2}\mathcal{K}_2 + \frac{1}{2}\mathcal{K}_3 + \mathcal{K}_4 \right)\\
\mathcal{K}_1 = \mathcal{A}\mathcal{U}^n + \mathcal{S}(t_n)\\
\mathcal{K}_2 = \mathcal{A}\left(\mathcal{U}^n + \frac{\tau}{2}\mathcal{K}_1\right) + \mathcal{S}(t_n + \frac{1}{2}\tau)\\
\mathcal{K}_3 = \mathcal{A}\left(\mathcal{U}^n + \frac{\tau}{2}\mathcal{K}_2\right) + \mathcal{S}(t_n + \frac{1}{2}\tau)\\
\mathcal{K}_4 = \mathcal{A}\left(\mathcal{U}^n + \tau\mathcal{K}_3\right) + \mathcal{S}(t_n + \tau)
\end{cases}
\end{equation}
where
\begin{equation}
\mathcal{U} = 
\begin{pmatrix}
u \\ u_t
\end{pmatrix},\ 
\mathcal{A} = 
\begin{pmatrix}
0 & 1\\
\nu^2 \Delta & 0\\
\end{pmatrix},\ 
\mathcal{S} = 
\begin{pmatrix}
0 \\ s
\end{pmatrix},\ 
\mathcal{K}_l = 
\begin{pmatrix}
K_{l,1} \\ K_{l,2}
\end{pmatrix}.
\end{equation}
One will have
\begin{equation}
	\begin{split}
	K_{11} = & u_t|^{t_n} \\
	K_{12} = & \nu^2 \Delta u|^{t_n} + s|^{t_n}\\
	\end{split}
\end{equation}
\begin{equation}
	\begin{split}
	K_{21} = & u_t|^{t_n} + \frac{\tau}{2}K_{12} \\
	K_{22} = & \nu^2 \Delta u|^{t_n} + \frac{\tau}{2}\nu^2 \Delta K_{11} + s|^{t_n+\frac{\tau}{2}}\\
	\end{split}
	\end{equation}
	\begin{equation}
	\begin{split}
	K_{31} = & u_t|^{t_n} + \frac{\tau}{2}K_{22} \\
	K_{32} = & \nu^2 \Delta u|^{t_n} + \frac{\tau}{2}\nu^2 \Delta K_{21} + s|^{t_n+\frac{\tau}{2}}\\
	\end{split}
	\end{equation}
	\begin{equation}
	\begin{split}
	K_{41} = & u_t|^{t_n} + \tau K_{32} \\
	K_{42} = & \nu^2 \Delta u|^{t_n} + \tau \nu^2 \Delta K_{31} + s|^{t_n+\tau}\\
	\end{split}
\end{equation}
Thus the boundary values of $K_{11}$, $K_{21}$, $K_{31}$ are required to approximate $\Delta K_{11}$, $\Delta K_{21}$, $\Delta K_{31}$, respectively. Check the expressions of $K_{21}$ and $K_{31}$, one may find that the boundary values of $K_{12}$ and $K_{22}$ are also necessary.

By the equation (\ref{acoustic eq}), one has
\begin{equation}
	\begin{split}
	K_{11}|_{x_{min}} = & (\partial_t f_0)|^{t_n} \\
	K_{12}|_{x_{min}} = & \nu^2 \Delta u|^{t_n}_{x_{min}} + s|^{t_n}_{x_{min}} \\
	= & u_{tt}|^{t_n}_{x_{min}} = (\partial_t^2 f_0)|^{t_n}\\
	\end{split}
\end{equation}
Then $K_{21}|_{x_{min}}$ will be easy to obtained. Note that $(\nu^2 \Delta K_{11})|_{x_{min}}$ is necessary to evaluate $K_{22}|_{x_{min}}$
\begin{equation}
	\begin{split}
	(\nu^2 \Delta K_{11})|_{x_{min}} = & (\nu^2 \Delta u_t|^{t_n})|_{x_{min}} = \biggl\{ \left[\partial_t(\nu^2 \Delta u)\right]|^{t_n}\biggr\}|_{x_{min}} \\
	= & \biggl\{ \left[\partial_t(u_{tt} - s)\right]|^{t_n}\biggr\}|_{x_{min}} \\
	= & (\partial_t^3 f_0 - s|_{x_{min}})|^{t_n}
	\end{split}
\end{equation}
Finally, $K_{31}|_{x_{min}}$ will be easy to obtained as well. The boundary conditions at other boundaries can be obtained similarly.
\begin{remark}
	Note that
	\begin{equation}
		\begin{split}
		u_t|^{t_n}_{x_{min}} = & (\partial_t f_0)|^{t_n} \\
		\nu^2 \Delta u|^{t_n}_{x_{min}} = & u_{tt}|^{t_n}_{x_{min}} - s|^{t_n}_{x_{min}} = (\partial_t^2 f_0)|^{t_n} -s|^{t_n}_{x_{min}}. \\
		\end{split}
	\end{equation}
\end{remark}
\nocite{*}
\bibliography{BibFD4thOrder}
\bibliographystyle{siam}

\end{document}